\newcommand{\B}{\ensuremath{\mathcal{B}}}
\newcommand{\LLL}{\ensuremath{\mathcal{L}}}
\newtheorem{theorem}{Theorem}
\newtheorem{corollary}[theorem]{Corollary}
\begin{document}

\title{A Problem Concerning Nonincident Points and Lines in Projective Planes}
\author{Douglas R.\ Stinson%
\thanks{Research supported by NSERC grant 203114-2011}
\\David R.\ Cheriton School of Computer Science\\
University of Waterloo\\
Waterloo, Ontario N2L 3G1, Canada
}
\date{\today}

\maketitle

\begin{abstract}In this paper, we study the problem of finding the largest possible
set of $s$ points and $s$ lines in a projective plane of order $q$, such that that none
of the $s$ points lie on any of the $s$ lines. We prove that $s \leq 1+(q+1)(\sqrt{q}-1)$.
We also show that equality can be attained in this bound whenever $q$ is an even power of two.
\end{abstract}

\section{Introduction}

Suppose $\Pi = (X, \LLL)$ is a projective plane of order $q$, where
$X$ is the set of points and $\LLL$ is the set of lines in $\Pi$.
For $Y \subseteq X$ and $\mathcal{M} \subseteq \LLL$, we say that
$(Y, \mathcal{M})$ is a {\it nonincident} set of points and lines
if $y \not\in M$ for every $y \in Y$ and every $M \in \mathcal{M}$.

Define $f(\Pi)$ to be the maximum integer $s$ such that there exists 
a nonincident set of $s$ points and $s$ lines in $\Pi$. Equivalently,
$f(\Pi)$ is the size of the largest square submatrix of zeroes in the incidence
matrix of $\Pi$.
We use a simple combinatorial argument to prove the upper bound $f(\Pi) \leq 
1+(q+1)(\sqrt{q}-1)$, which holds for any projective plane $\Pi$ of order $q$.
We also show that this bound is tight
in certain cases, namely, for the desarguesian plane PG$(2,q)$
when $q$ is an even power of two. This is done 
by utilising maximal arcs.

\section{Main Results}

\begin{theorem} 
\label{t1}
For any set $Y$ of $s$ points in a projective plane of order $q$,
the number of lines disjoint from $Y$ is at most 
\[  \frac{q^3+q^2+q-qs}{q+s} .\]
\end{theorem}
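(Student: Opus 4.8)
The plan is to run a double-counting argument combined with Cauchy--Schwarz. For each line $\ell \in \LLL$, write $a_\ell = |\ell \cap Y|$ for the number of points of $Y$ lying on $\ell$; the quantity I want to bound is the number of lines with $a_\ell = 0$. First I would record the two standard incidence identities for a projective plane of order $q$. Summing $a_\ell$ over all lines counts incidences between $Y$ and $\LLL$, and since each of the $s$ points lies on exactly $q+1$ lines, $\sum_{\ell} a_\ell = s(q+1)$. Summing $a_\ell(a_\ell-1)$ counts ordered pairs of distinct points of $Y$ sharing a line, and since each pair determines a unique line, $\sum_{\ell} a_\ell(a_\ell-1) = s(s-1)$. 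Adding these gives $\sum_\ell a_\ell^2 = s(s-1) + s(q+1) = s(s+q)$.

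The crucial step is to apply Cauchy--Schwarz not over all $q^2+q+1$ lines, but only over the set $\mathcal{M}^+$ of lines that actually meet $Y$ (those with $a_\ell \geq 1$). Because every term with $a_\ell = 0$ contributes nothing to either sum, both identities above remain valid when the summation is restricted to $\mathcal{M}^+$. Then Cauchy--Schwarz yields $\left( \sum_{\ell \in \mathcal{M}^+} a_\ell \right)^2 \leq |\mathcal{M}^+| \sum_{\ell \in \mathcal{M}^+} a_\ell^2$, that is, $s^2(q+1)^2 \leq |\mathcal{M}^+| \cdot s(s+q)$. Dividing through gives the lower bound $|\mathcal{M}^+| \geq s(q+1)^2/(s+q)$.

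To finish, I would observe that the number of lines disjoint from $Y$ equals $q^2+q+1 - |\mathcal{M}^+|$, so it is at most $q^2+q+1 - s(q+1)^2/(s+q)$. A short algebraic simplification of this expression over the common denominator $s+q$ collapses it to $(q^3+q^2+q-qs)/(q+s)$, which is exactly the claimed bound. I do not expect a genuine obstacle here: this is a clean second-moment estimate. The only points requiring care are that Cauchy--Schwarz must be applied over $\mathcal{M}^+$ rather than all of $\LLL$ (restricting to the smaller index set is precisely what sharpens the inequality and produces the correct coefficient on $s$), and that the final numerator simplification be carried out correctly.
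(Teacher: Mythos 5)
Your proof is correct and follows essentially the same route as the paper: the same incidence identities $\sum a_\ell = s(q+1)$ and $\sum a_\ell^2 = s(q+s)$, followed by a second-moment lower bound on the number of lines meeting $Y$ (the paper writes it as $\sum_{B}(|B|-\beta)^2 \geq 0$ with $\beta = (q+s)/(q+1)$, which is exactly your Cauchy--Schwarz step in disguise), and the same complementary count at the end. No gaps; your emphasis on restricting the inequality to the lines that actually meet $Y$ is precisely the right point of care.
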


\begin{proof}
Suppose that $(X, \LLL)$ is a projective plane of order $q$.
For a subset $Y\subseteq X$ of $s$ points, define $\LLL_Y = \{L \in \LLL : L \cap Y \neq \emptyset\}$
and define $\LLL'_Y = \LLL \setminus \LLL_Y$. Furthermore, for every $L \in \LLL_Y$, 
define $L_Y = L \cap Y$, and then define $\B = \{L_Y: L \in \LLL_Y\}$. Observe that
$\B$ consists of the nonempty intersections of the lines in $\LLL$ with  the set $Y$.
Denote $b = |\B| = |\LLL_Y|$.

We will study the set system
$(Y,\B)$.  We have the following equations:
\begin{eqnarray*}
\sum_{B \in \B} 1 &=& b\\
\sum_{B \in \B} |B| &=& (q+1)s\\
\sum_{B \in \B} \binom{|B|}{2} &=& \binom{s}{2}.
\end{eqnarray*}
 From the above equations, it follows that 
\begin{eqnarray*}
\sum_{B \in \B} |B|^{2} &=& s(q+s).
\end{eqnarray*}
The $q+1$ blocks in $\B$ through any point $y \in Y$
contain the $s-1$ points in $Y \setminus \{y\}$ once each, as well as $q+1$ occurrences
of $y$. So the average size of a block in $\B$ that contains any given point $y\in Y$ is $(q+s)/(q+1)$.
Therefore we define $\beta = (q+s)/(q+1)$
and compute as follows:
\begin{eqnarray*}
0 &\leq& \sum_{B \in \B} (|B|-\beta)^{2} \\
&=& s(q+s) - 2\beta(q+1)s + \beta^2 b ,
\end{eqnarray*}
from which it follows that
\begin{eqnarray*} b &\geq& \frac{s(2\beta(q+1) - (q+s))}{\beta^2}\\
&=& \frac{(q+1)^2s}{q+s}.
\end{eqnarray*}
Therefore,
\begin{eqnarray*}
|\LLL'_Y| &=& q^2+q+1 - b \\
& \leq& q^2+q+1 - \frac{(q+1)^2s}{q+s} \\
&=& \frac{q^3+q^2+q-qs}{q+s}.
\end{eqnarray*}
\end{proof}

\noindent{\it Remark.}
The inequality $b \geq (q+1)^2s/(q+s)$ that we proved above is in fact
a well-known result that has been proven in many different
guises over the years. For example, Mullin and Vanstone \cite{MV} proved
that $b \geq r^2v / (r + \lambda(v-1))$ in any $(r,\lambda)$ design on $v$ points.
If we let $r = q+1$, $\lambda=1$ and $v=s$, then we obtain $b \geq (q+1)^2s/(q+s)$.

\begin{corollary}
If there exists a nonincident set of $s$ points and $t$ lines in a projective
plane of order $q$, then 
\[ t \leq \frac{q^3+q^2+q-qs}{q+s}.\]
\end{corollary}

Before proving our next general result, we look at a small example. 
In Figure \ref{plot1.fig}, we graph the functions
$(q^3+q^2+q-qs)/(q+s)$ and $s$ for $q=16$ and $s\leq 100$. 
The point of intersection is $(52,52)$ and it is then easy to see that
 $f(\Pi) \leq 52$ for any projective plane $\Pi$ of order $16$.

\begin{figure}
\caption{Nonincident points and lines when $q=16$}
\label{plot1.fig}
\begin{center}
\includegraphics[width=90mm]{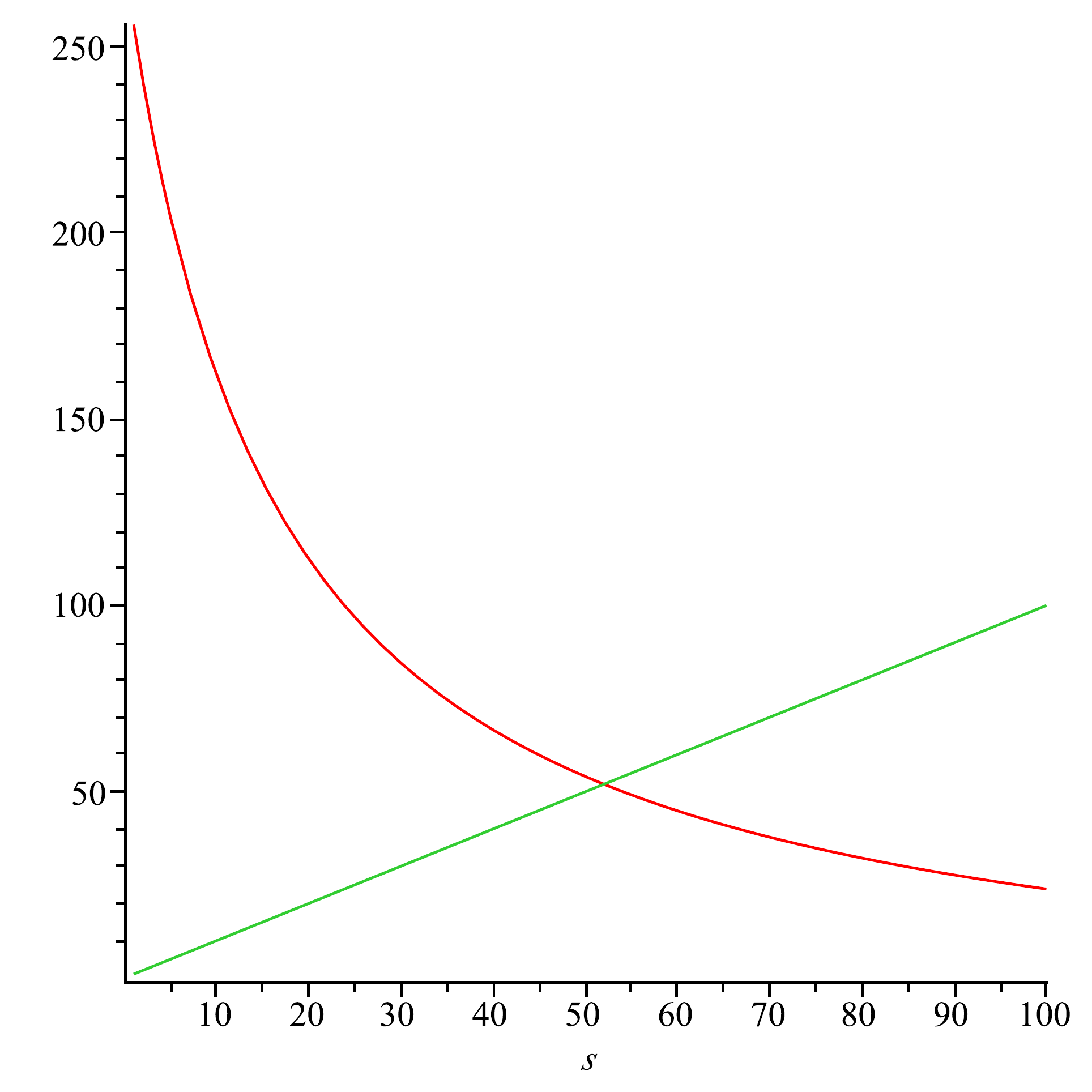}
\end{center}
\end{figure}

In general, it is easy to compute the  point of intersection of these two functions 
as follows:
\begin{eqnarray*} \frac{q^3+q^2+q-qs}{q+s} = s &\Leftrightarrow & 
s^2 +2qs- (q^3+q^2+q) = 0
\\
&\Leftrightarrow & s = -q \pm \sqrt{q^3+2q^2+q}\\
&\Leftrightarrow & s = -q \pm (q+1)\sqrt{q}.
\end{eqnarray*}
Since $s > 0$, the point of intersection occurs
when \[s= -q + (q+1)\sqrt{q} = 1+ (q+1)(\sqrt{q}-1).\]
The following result is now straightforward.

\begin{theorem} 
\label{main.thm} For any projective plane $\Pi$ of order $q$,
it holds that $f(\Pi) \leq 1+ (q+1)(\sqrt{q}-1)$.
\end{theorem}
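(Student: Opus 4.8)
The plan is to feed a maximal nonincident configuration directly into the Corollary. Writing $s = f(\Pi)$, by definition there exists a nonincident set of $s$ points and $s$ lines, so setting $t = s$ in the Corollary yields the single inequality
\[ s \leq \frac{q^3+q^2+q-qs}{q+s}. \]
Everything after this is elementary manipulation, since all of the combinatorial work was already carried out in Theorem \ref{t1}.

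Next I would clear the denominator, which is legitimate because $q+s > 0$, and collect terms. This converts the inequality into the quadratic form $s^2 + 2qs - (q^3+q^2+q) \leq 0$, which is exactly the expression whose roots were computed in the display immediately preceding the statement. The left-hand side is an upward-opening parabola in $s$ with roots $-q \pm (q+1)\sqrt{q}$, so the inequality can hold only for values of $s$ lying between these two roots. The negative root is irrelevant because $s > 0$, and so we are forced to conclude $s \leq -q + (q+1)\sqrt{q} = 1+(q+1)(\sqrt{q}-1)$, which is the claimed bound.

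An equivalent and perhaps more conceptual route avoids the quadratic entirely: the function on the right, $g(s) = (q^3+q^2+q-qs)/(q+s)$, is strictly decreasing on the range $s > -q$, while the identity $y = s$ is strictly increasing, so their graphs cross exactly once, at the point $s^* = 1+(q+1)(\sqrt{q}-1)$ already located above. The inequality $s \leq g(s)$ can therefore hold only when $s \leq s^*$, giving the same conclusion.

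I do not expect any genuine obstacle here, as the paper itself signals by calling the result ``straightforward''. The only points requiring a moment's care are the direction of the manipulation — multiplying through by $q+s$ preserves the inequality precisely because $q+s$ is positive — and the observation that only the positive root of the quadratic is meaningful for a count of points.
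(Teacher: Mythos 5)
Your proposal is correct and follows essentially the same route as the paper: apply Theorem \ref{t1} with $t=s$ to get $s \leq (q^3+q^2+q-qs)/(q+s)$, and then observe that this inequality forces $s$ to lie at or below the unique positive crossing point $1+(q+1)(\sqrt{q}-1)$ computed just before the theorem. The paper leaves the final monotonicity/quadratic step implicit (``just as in the example considered above''), whereas you spell it out via the quadratic $s^2+2qs-(q^3+q^2+q)\leq 0$; this is merely a more explicit rendering of the same argument.
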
 

\begin{proof} Suppose there is a nonincident set of $s$ points and
$s$ lines in a projective plane of order $q$. Theorem \ref{t1} 
implies that $s \leq (q^3+q^2+q-qs)/(q+s)$. However, for $s > 1+ (q+1)(\sqrt{q}-1)$,
we have that $s > (q^3+q^2+q-qs)/(q+s)$, just as in the example considered above.
It follows that $s \leq 1+ (q+1)(\sqrt{q}-1)$.
\end{proof}

Next, we examine the case of equality in Theorem \ref{t1}. 
This will involve maximal arcs, which we now define.
A \emph{maximal $(s,\beta)$-arc} in a projective plane of order $q$ is a set $Y$ of
$s$ points such that every line meets $Y$ in $0$ or $\beta$ points. 
It is well-known that a maximal $(s,\beta)$-arc has $s = 1+(q+1)(\beta-1)$ and 
the number of lines that intersect the maximal arc is precisely $s(q+1)/
\beta = s(q+1)^2/(q+s)$.
For additional information on maximal arcs, see \cite[\S VI.41.3]{CD}.

\begin{corollary}
\label{c3}
Suppose we have a set $Y$ of $s$ points in a projective plane of order $q$ such that 
the number of lines disjoint from $Y$ is equal to
\[  \frac{q^3+q^2+q-qs}{q+s} .\]
Then $Y$ is a maximal $(s,\beta)$-arc, where $s = (q+1)(\beta - 1) - 1$.
Conversely, if $Y$ is a maximal  $(s,\beta)$-arc in a projective plane of order $q$, then 
number of lines disjoint from $Y$ is equal to
$(q^3+q^2+q-qs)/(q+s)$.
\end{corollary}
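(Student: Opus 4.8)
The plan is to recognise that this corollary is precisely the equality analysis of Theorem \ref{t1}, so the entire argument hinges on the single inequality $0 \leq \sum_{B \in \B}(|B|-\beta)^2$ used there, with $\beta = (q+s)/(q+1)$. I would begin by observing that the chain of (in)equalities in the proof of Theorem \ref{t1} passes through the identity $|\LLL'_Y| = q^2+q+1-b$, so the hypothesis that $|\LLL'_Y|$ equals $(q^3+q^2+q-qs)/(q+s)$ is equivalent to $b$ attaining its lower bound $(q+1)^2s/(q+s)$ exactly. Tracing this back through the computation, equality in $b \geq (q+1)^2s/(q+s)$ forces $\sum_{B \in \B}(|B|-\beta)^2 = 0$.

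For the forward direction, I would then conclude from $\sum_{B \in \B}(|B|-\beta)^2 = 0$ that $|B| = \beta$ for every $B \in \B$; that is, every line of $\Pi$ meets $Y$ in either $0$ points (the lines of $\LLL'_Y$) or in exactly $\beta$ points. Since each $|B|$ is a positive integer, $\beta$ is a positive integer, and this is exactly the defining property of a maximal $(s,\beta)$-arc. It remains to record the relation between $s$ and $\beta$: solving $\beta = (q+s)/(q+1)$ for $s$ gives $s = (q+1)\beta - q = 1+(q+1)(\beta-1)$, which is the asserted parameter relationship (matching the definition stated just above the corollary).

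For the converse, I would run the same computation forward. If $Y$ is a maximal $(s,\beta)$-arc then every block of $\B$ has size exactly $\beta$, so the identity $\sum_{B \in \B}|B| = (q+1)s$ from Theorem \ref{t1} yields $b\beta = (q+1)s$, i.e.\ $b = (q+1)s/\beta$. The maximal-arc relation $s = 1+(q+1)(\beta-1)$ rearranges to $\beta = (q+s)/(q+1)$, and substituting this in gives $b = (q+1)^2s/(q+s)$. Hence the number of disjoint lines is $q^2+q+1-b$, which is exactly $(q^3+q^2+q-qs)/(q+s)$ by the final computation in Theorem \ref{t1}.

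I expect the only delicate point to be the bookkeeping that converts ``equality in the displayed bound for $|\LLL'_Y|$'' into ``equality in the sum-of-squares step,'' since every intermediate relation in Theorem \ref{t1} must be verified to be an equality or identity before the reduction goes through; in fact all of them are, so the reduction is clean. The integrality of $\beta$ is then immediate once all block sizes coincide, and no further case analysis is required.
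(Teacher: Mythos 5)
Your proof is correct and follows essentially the same route as the paper: equality in the bound of Theorem \ref{t1} is traced back to $\sum_{B\in\B}(|B|-\beta)^2=0$, forcing every intersecting line to meet $Y$ in exactly $\beta=(q+s)/(q+1)$ points, with the converse following from the standard parameters of maximal arcs. Note that your derived relation $s=1+(q+1)(\beta-1)$ is the correct one (it matches the definition given just before the corollary); the statement's ``$s=(q+1)(\beta-1)-1$'' appears to be a sign typo in the paper, which your argument silently corrects.
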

\begin{proof}
 From the proof of Theorem \ref{t1}, it is easy to see that equality holds if and
only if every line in $\LLL_Y$ meets $Y$ in exactly $\beta$ points, where
$\beta = (q+s)/(q+1)$. It immediately follows that every line in the plane meets $Y$ in $0$ or $\beta$ points,
and therefore $Y$ is a maximal arc. 
The converse follows from the basic properties of maximal arcs mentioned above.
\end{proof}

\begin{theorem}
\label{even.thm}
When $q$ is an even power of $2$, 
there exist nonincident sets of $s$ 
points and 
$s$ lines in 
PG$(2,q)$, where $s = 1+(q+1)(\sqrt{q}-1)$. 
\end{theorem}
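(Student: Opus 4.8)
The plan is to produce the required nonincident configuration directly from a maximal arc of the right size. Observe that $s = 1+(q+1)(\sqrt{q}-1)$ is exactly the number of points in a maximal $(s,\beta)$-arc with $\beta = \sqrt{q}$, since such an arc has $s = 1+(q+1)(\beta-1)$. So the first step is to take $Y$ to be a maximal $(s,\sqrt{q})$-arc in PG$(2,q)$, and to let $\mathcal{M}$ be the set of \emph{all} lines that are disjoint from $Y$. By construction $(Y,\mathcal{M})$ is nonincident, so the only thing left to verify is that $|\mathcal{M}| = s$.

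Counting the lines in $\mathcal{M}$ is immediate from the machinery already in place. By the converse direction of Corollary \ref{c3}, a maximal $(s,\sqrt{q})$-arc $Y$ has exactly $(q^3+q^2+q-qs)/(q+s)$ lines disjoint from it. But the arithmetic carried out just before Theorem \ref{main.thm} shows that $(q^3+q^2+q-qs)/(q+s) = s$ precisely when $s = 1+(q+1)(\sqrt{q}-1)$, which is our value of $s$. Hence $|\mathcal{M}| = s$, and $(Y,\mathcal{M})$ is a nonincident set of $s$ points and $s$ lines, as required.

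All that remains --- and this is the substantive geometric input --- is to guarantee that a maximal $(s,\sqrt{q})$-arc actually exists in PG$(2,q)$ when $q$ is an even power of $2$. For this I would invoke Denniston's construction of maximal arcs, which yields a maximal $(s,\beta)$-arc in PG$(2,q)$ whenever $q$ is a power of $2$ and $\beta$ is a power of $2$ dividing $q$. Writing $q = 2^{2k}$ (an even power of $2$), we have $\sqrt{q} = 2^{k}$, which is a power of $2$ dividing $q$; thus the hypotheses of Denniston's theorem are met with $\beta = \sqrt{q}$, and the desired arc exists. The existence of these arcs is the one genuinely nontrivial ingredient: the rest of the argument is merely a matter of matching parameters against Corollary \ref{c3} and the intersection computation. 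See \cite[\S VI.41.3]{CD} for the relevant facts about maximal arcs and the Denniston construction.
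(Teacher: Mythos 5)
Your proof is correct and follows essentially the same route as the paper: both invoke Denniston's construction to obtain a maximal $(s,\sqrt{q})$-arc in PG$(2,q)$ and then use Corollary \ref{c3} (equivalently, the standard line-count for maximal arcs) together with the identity $(q^3+q^2+q-qs)/(q+s)=s$ at $s=1+(q+1)(\sqrt{q}-1)$ to conclude that exactly $s$ lines miss the arc. The only cosmetic difference is that you state Denniston's theorem as ``$\beta$ a power of $2$ dividing $q$'' rather than the paper's $0<u<v$, but your choice $\beta=\sqrt{q}=2^{k}$ with $q=2^{2k}$ satisfies both, so nothing is lost.
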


\begin{proof}
Denniston \cite{denn} proved that there is a maximal $(s,2^u)$-arc
in PG$(2,2^v)$ whenever $0 < u < v$. Suppose $v$ is even and we take
$u = v/2$. Therefore we have a maximal $(s,\beta)$-arc, where $q = 2^v$, 
$\beta = \sqrt{q}$ and $s = 1+(q+1)(\sqrt{q}-1)$.
 
Suppose we take $Y$ to be the $s$ points in the arc and we apply Corollary \ref{c3}.
Since $s = 1+(q+1)(\beta-1)$, there are exactly $s$ lines in $\LLL'_Y$.
Therefore we have a nonincident set of $s$ points and $s$ lines in PG$(2,q)$.
\end{proof}

\begin{corollary}
\label{even.cor}
If $q$ is an even power of $2$, then  $f(\text{PG}(2,q)) = 1+(q+1)(\sqrt{q}-1)$.
\end{corollary}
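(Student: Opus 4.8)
The plan is to combine the upper bound of Theorem~\ref{main.thm} with the explicit construction of Theorem~\ref{even.thm}, since $f(\text{PG}(2,q))$ is by definition the largest integer $s$ for which a nonincident set of $s$ points and $s$ lines exists. Writing $s = 1+(q+1)(\sqrt{q}-1)$, I would first apply Theorem~\ref{main.thm} to the plane $\Pi = \text{PG}(2,q)$ to obtain $f(\text{PG}(2,q)) \le s$. This half of the argument holds for every plane of order $q$ and requires nothing special about $q$.

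For the reverse inequality, I would first observe that the hypothesis that $q$ is an even power of $2$ guarantees $\sqrt{q}$ is an integer, so $s$ is a genuine positive integer and hence a legitimate candidate value for the integer-valued quantity $f$. Then Theorem~\ref{even.thm} supplies a nonincident set consisting of exactly $s$ points and $s$ lines in $\text{PG}(2,q)$, which by the definition of $f$ forces $f(\text{PG}(2,q)) \ge s$. Combining the two inequalities yields $f(\text{PG}(2,q)) = s = 1+(q+1)(\sqrt{q}-1)$.

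There is essentially no substantive obstacle here: the corollary is a bookkeeping consequence of the two theorems, and the matching upper and lower bounds meet exactly because both are pegged to the same value $s$. The only point that deserves a moment's attention is integrality — were $\sqrt{q}$ irrational, the expression $1+(q+1)(\sqrt{q}-1)$ could not be attained by an integer-valued $f$ at all — but this is precisely what the even-power-of-$2$ hypothesis rules out, and it is the same hypothesis that makes Denniston's maximal-arc construction available in Theorem~\ref{even.thm}. Thus the hardest part is already done in the two preceding results; the corollary merely records that their bounds coincide in this case.
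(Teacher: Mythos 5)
Your proposal is correct and follows exactly the paper's argument: the lower bound comes from the construction in Theorem~\ref{even.thm} and the upper bound from Theorem~\ref{main.thm}, and the two coincide. The extra remark on integrality of $\sqrt{q}$ is a reasonable observation but not needed beyond what the two cited theorems already provide.
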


\begin{proof}
 From Theorem \ref{even.thm}, we have $f(\text{PG}(2,q)) \geq 1+(q+1)(\sqrt{q}-1)$.
However, $f(\text{PG}(2,q)) \leq 1+(q+1)(\sqrt{q}-1)$ from Theorem \ref{main.thm}.
It follows that $f(\text{PG}(2,q)) = 1+(q+1)(\sqrt{q}-1)$.
\end{proof}

In the case where $q$ is odd, it was shown by
Ball, Blokhuis and Mazzocca \cite{BBM} that there is no nontrivial maximal
arc in the desarguesian plane PG$(2,q)$. The existence of maximal arcs
for nondesarguesian projective planes of odd order is unresolved at
the present time.

\section*{Acknowledgements}
Thanks to Douglas West for posing the problem studied in this paper, and for
making useful comments on earlier drafts of this paper.

\end{document}